\documentclass[10pt]{article}

\usepackage[margin=1.12in]{geometry}
\usepackage{amsmath,amssymb,amsthm,mathtools}
\usepackage{authblk}
\usepackage{hyperref}
\usepackage{microtype}

\hypersetup{colorlinks=true,linkcolor=blue,citecolor=blue,urlcolor=blue}

\newtheorem{theorem}{Theorem}[section]
\newtheorem{lemma}[theorem]{Lemma}
\newtheorem{proposition}[theorem]{Proposition}
\newtheorem{corollary}[theorem]{Corollary}
\newtheorem{conjecture}[theorem]{Conjecture}
\newtheorem{question}[theorem]{Question}

\newcommand{\ex}{\operatorname{ex}}

\newcommand{\floor}[1]{\left\lfloor#1\right\rfloor}

\title{\large\bf Strong counterexamples to a supersaturation question of Ma--Yuan}
\author{
Wanfang~Chen\thanks{School of Mathematical Sciences, University of Science and Technology of China, Hefei, 230026, P.R.  China.
        Email: \texttt{a372959313@gmail.com.}}\qquad
Long-Tu~Yuan\thanks{School of Mathematical Sciences and Shanghai Key Laboratory of PMMP, East China Normal University, 500 Dongchuan Road, Shanghai 200240, P.R.  China.
        Email: \texttt{ltyuan@math.ecnu.edu.cn.} 
        Supported in part by National Natural Science Foundation of China grant 12271169 and Science and Technology Commission
        of Shanghai Municipality (No. 22DZ2229014).
    }}
\date{\today}

\begin{document}
\maketitle

\begin{abstract}
For a graph $F$, let $h_F(n,q)$ be the minimum number of copies of $F$ in an $n$-vertex graph with $\ex(n,F)+q$ edges, where $\ex(n,F)$ is the maximum number of edges in an $n$-vertex $F$-free graph. Let $c(n,F)$ be the minimum number of copies obtained by adding one edge to an extremal $F$-free graph. Mubayi's supersaturation conjecture predicts, under a stability hypothesis, that $h_F(n,q)\ge q\,c(n,F)$. Ma and Yuan recently constructed stable graph counterexamples for every fixed $q\ge4$; they asked whether the one-edge equality $h_F(n,1)=c(n,F)$ might still hold for every graph $F$ containing a cycle.

We give a negative answer to their question. For each integer $t\ge6$, let $H_t$ be obtained from the $t$-vertex path by replacing each edge with a $3t$-page book, using disjoint page vertices for different path edges. Then $h_{H_t}(n,1)<c(n,H_t)$ for infinitely many values of $n$. Moreover, by taking $t$ large, the ratio $h_{H_t}(n,1)/c(n,H_t)$ can be made arbitrarily small along infinitely many values of $n$.
\end{abstract}

\section{Introduction}

Let $F$ be a graph. Write $\ex(n,F)$ for the maximum number of edges in an $n$-vertex $F$-free graph. Let $N_F(G)$ denote the number of copies of $F$ in $G$, counted as subgraphs. For $q\ge1$, define
\[
   h_F(n,q)\coloneqq\min\{N_F(G): |V(G)|=n,\ e(G)=\ex(n,F)+q\}.
\]
Following Ma and Yuan~\cite{MaYuan2023}, let $t_F(n,q)$ be the minimum number of copies of $F$ in a graph obtained from an $n$-vertex extremal $F$-free graph by adding $q$ new edges. In particular,
\[
   c(n,F)\coloneqq t_F(n,1)
\]
is the usual one-edge comparison quantity. Clearly $h_F(n,q)\le t_F(n,q)$.

For cliques, the classical results of Rademacher (unpublished; see Erd\H{o}s~\cite{Erdos1962}), Erd\H{o}s~\cite{Erdos1955,Erdos1962}, and Lov\'asz--Simonovits~\cite{LovaszSimonovits1976,LovaszSimonovits1983} show that the minimum supersaturation just above the Tur\'an number is obtained by adding the extra edges to an extremal graph. Mubayi extended this phenomenon to color-critical graphs~\cite{Mubayi2010}, and Pikhurko and Yilma later proved the exact equality $h_F(n,q)=t_F(n,q)$ for color-critical graphs in the range $1\le q\le \varepsilon_F n$~\cite{PikhurkoYilma2017}. Motivated by the stability method, Mubayi formulated the following conjecture for hypergraphs (including graphs)~\cite{Mubayi2013}.

\begin{conjecture}[\cite{Mubayi2013}]\label{conj:mubayi}
Let $r\ge2$ and let $F$ be a non-$r$-partite stable $r$-graph. For every positive integer $q$, if $n$ is sufficiently large, then
\[
   h_F(n,q)\ge q\,c(n,F).
\]
\end{conjecture}

Here stable means that $\ex(n,F)$ is achieved by a unique $n$-vertex $r$-graph for all sufficiently large $n$, and every $F$-free $r$-graph with $(1-o(1))\ex(n,F)$ edges is $o(n^r)$-close to this extremal graph.

Ma and Yuan~\cite{MaYuan2023} refuted Conjecture~\ref{conj:mubayi} in the graph case by constructing stable non-bipartite graphs for which the inequality fails for every fixed $q\ge4$. Their examples are delicate, and they asked whether the one-edge case might still be true.

\begin{question}[\cite{MaYuan2023}]\label{q:mayuan}
Is it true that for every graph $F$ containing a cycle and for all sufficiently large $n$, one has
\[
   h_F(n,1)=t_F(n,1)?
\]
\end{question}

This question does not impose the stability hypothesis from Conjecture~\ref{conj:mubayi}.  We give a strong negative answer.

Our counterexamples are book expansions of paths.  Let $P_t$ be the path $v_1v_2\cdots v_t$ on $t$ vertices.  For $t\ge6$, define $H_t$ as follows.  For each edge $v_iv_{i+1}$ of $P_t$, add $3t$ new vertices adjacent to both $v_i$ and $v_{i+1}$, with these new vertices chosen disjointly for different edges.  Equivalently, each edge of the core path is the spine of a $3t$-page book.  In symbols,
\[
   H_t\coloneqq P_t^{(3t)}.
\]
Since $H_t$ contains triangles, it is non-bipartite.

The first theorem gives the quantitative one-edge comparison needed for the construction.
\begin{theorem}\label{thm:main}
Fix $t\ge6$. There is $s_0=s_0(t)$ such that for every $s\ge s_0$, one has
\begin{enumerate}
    \item $h_{H_t}\left( 2(t-1)s,1 \right) \le \frac{t!}{2}\Lambda_{t,s}$, 
    \item $c\left( 2(t-1)s,H_t \right) \ge 2^{t-2}(t-2)!\Lambda_{t,s}$, 
\end{enumerate}
where $\Lambda_{t,s}\coloneqq\prod_{j=0}^{t-2}\binom{(t-1)s-3tj}{3t}$. 
Consequently,
\[
   \frac{h_{H_t}(2(t-1)s,1)}{c(2(t-1)s,H_t)}
   \le
   \frac{(t!/2)\Lambda_{t,s}}{2^{t-2}(t-2)!\Lambda_{t,s}}
   =
   \frac{t(t-1)}{2^{t-1}}
   \to 0 \quad\text{as}\quad t \to \infty. 
\]
\end{theorem}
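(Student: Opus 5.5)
The plan is to pin down the extremal structure of $H_t$-free graphs at $n=2(t-1)s$ exactly, and then compare two ways of adding one edge: to an extremal graph, and to a rearranged graph with the same number of edges plus one. Write $N=(t-1)s$.

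\textbf{The candidate extremal graph.} Let $G_0$ be $K_{N,N}$ with parts $A,B$, plus $s$ vertex-disjoint copies of $K_{t-1}$ inside each part. Each part then contains a graph with $s\binom{t-1}{2}$ edges and no $P_t$. By Erd\H{o}s--Gallai this is the maximum for a $P_t$-free graph on $N$ vertices, with equality only for disjoint copies of $K_{t-1}$ when $(t-1)\mid N$.

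\textbf{Why $G_0$ should be $H_t$-free.} I will show that in any graph of the form ``$K_{N,N}$ plus graphs inside the parts'', a copy of $H_t$ forces a $P_t$ inside a single part. Each spine edge needs $3t$ common neighbours. Suppose a spine edge crosses between $A$ and $B$. Its pages then lie in one part and must use edges inside that part. Since the graphs inside the parts have maximum degree $t-2$, this supplies fewer than $3t$ pages. Hence every spine edge lies inside a part. Consecutive spine edges share a vertex, so the whole spine $P_t$ lies in one part.

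\textbf{Step 1: determine $\ex(n,H_t)$ and the extremal graphs.} The claim is that $\ex(n,H_t)=e(G_0)=N^2+2s\binom{t-1}{2}$ for large $s$, and that every extremal graph is of the form above: $K_{N,N}$ plus disjoint $K_{t-1}$'s in each part. I would argue this by the standard stability route.
\begin{itemize}
\item[(a)] $H_t$ has chromatic number $3$, so Erd\H{o}s--Stone--Simonovits stability makes an extremal graph $G$ close to $K_{N,N}$.
\item[(b)] Take a max-cut $(A,B)$ and remove the $O(1)$-bounded set of vertices with small degree to the opposite side. This yields an almost complete bipartite frame.
\item[(c)] Inside each part, the graph must be $P_t$-free up to the exceptional vertices, since any $P_t$ in a part extends to $H_t$ using pages from the other side. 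A counting argument then shows exceptional vertices do not exist and the parts are balanced.
\item[(d)] The Erd\H{o}s--Gallai equality case then forces disjoint $K_{t-1}$'s in each part.
\end{itemize}
I expect this exact extremal result, with its uniqueness, to be the main obstacle. Stray vertices and a part-size imbalance of order $O(1)$ trade against the number of edges inside the parts. Ruling them out requires care, and the condition $t\ge6$ presumably enters here.

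\textbf{Step 2: the upper bound in (1).} Start from $G_0$ and, inside $A$, replace two cliques $K_{t-1},K_{t-1}$ by $K_t\cup K_{t-2}$ on the same $2(t-1)$ vertices. The edge change is $(t-1)-(t-2)=+1$, so the new graph $G_1$ has $\ex(n,H_t)+1$ edges by Step 1. By the observation above, every copy of $H_t$ in $G_1$ has its spine inside a single part. The graph inside each part of $G_1$ has maximum degree at most $t-1$, so the crossing-edge argument still applies. The only $P_t$'s inside a part lie in the $K_t$, which contains $t!/2$ Hamiltonian paths. For each such spine, the pages are $t-1$ disjoint $3t$-sets chosen in turn from $B$, which has $N$ vertices. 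This gives exactly $\Lambda_{t,s}$ choices, so $N_{H_t}(G_1)=\frac{t!}{2}\Lambda_{t,s}$.

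\textbf{Step 3: the lower bound in (2).} By Step 1, every extremal graph is $G_0$ up to isomorphism. Add one edge $e$ to $G_0$; by symmetry $e$ lies inside a part, say $A$. There are two cases.
\begin{itemize}
\item If $e$ joins two cliques $K,K'$, count the $P_t$'s through $e$. Such a path consists of a path of $a$ vertices in $K$ ending at an endpoint of $e$, followed by $t-a$ vertices in $K'$ starting at the other endpoint. A path of $k$ vertices in $K_{t-1}$ starting at a fixed vertex can be chosen in $(t-2)!/(t-1-k)!$ ways, so the count is $\sum_{a=1}^{t-1}\frac{(t-2)!}{(t-1-a)!}\cdot\frac{(t-2)!}{(a-1)!}$. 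This should be at least $2^{t-2}(t-2)!$; indeed the sum equals $(t-2)!\sum_a\binom{t-2}{a-1}(t-1-a)!$ after rewriting, and is at least $(t-2)!\,2^{t-2}$.
\item If $e$ could lie elsewhere, it cannot: $A$ and $B$ are complete to each other and every vertex of $A$ lies in a clique, so $e$ necessarily joins two cliques in the same part.
\end{itemize}
Each spine obtained in the first case again extends in exactly $\Lambda_{t,s}$ ways. One must also check that the added edge creates no extra spines across parts; it does not, since degrees inside parts stay at most $t-1$. This gives $c(n,H_t)\ge2^{t-2}(t-2)!\,\Lambda_{t,s}$. The stated ratio bound then follows immediately from (1) and (2).
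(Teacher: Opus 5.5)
Your Steps 2 and 3 are essentially the paper's argument: the same $K_t\cup K_{t-2}$ rearrangement, the same core counts $t!/2$ and $2^{t-2}(t-2)!$, and the same page factor $\Lambda_{t,s}$. But both steps rest on Step 1, and Step 1 is not proved. Part (1) needs $e(G_1)=\ex(n,H_t)+1$ exactly, and part (2) needs $G_0$ to be the \emph{unique} extremal graph. Your outline (a)--(d) delivers neither.

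Stability plus a max-cut only makes the set of vertices with small cross-degree $o(n)$, not $O(1)$ as you assert in (b). The sentence ``a counting argument then shows exceptional vertices do not exist'' is the entire difficulty. It cannot be settled by a local embedding argument. For instance, $K_{N,N}$ plus one vertex joined to everything is $H_t$-free: cross edges have codegree $1$, and a core path has $t-1\ge5$ edges, at most two of them at the new vertex. So a vertex with linear degree into both sides can only be excluded by an exact global trade-off against the in-part terms $p_t(a)+p_t(b)$, which you have not carried out.

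The missing idea, as in the paper, is to bypass stability with a codegree split. Call an edge heavy if its codegree is at least $R=3t(t-1)+t$, and light otherwise.
\begin{enumerate}
\item Greedy page selection shows the heavy edges form a $P_t$-free graph, so there are at most $p_t(n)=O(n)$ of them.
\item The light edges form a $B_R$-free graph. If $e(G)\ge M_t(n)$, there are at least $\lfloor n^2/4\rfloor-O_t(1)$ of them. The Miao--Liu--van Dam bound $\lfloor (n-1)^2/4\rfloor+O_R(1)$ for non-bipartite $B_R$-free graphs then forces the light graph to be bipartite, say with parts $A,B$.
\item Every light edge now crosses, and the edges inside $A$ and inside $B$ are heavy, hence $P_t$-free. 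This gives $e(G)\le e(G_L)+(ab-e(G_L))+p_t(a)+p_t(b)\le M_t(n)$.
\item Equality forces complete cross edges and Erd\H{o}s--Gallai-extremal parts. An unbalanced split $(N+d,N-d)$ loses $d^2$ cross pairs and never gains in the path terms.
\end{enumerate}
This yields $\ex(n,H_t)=e(G_0)$ and uniqueness of $G_0$, with no exceptional vertices to clean up.

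Two small corrections in Step 3. First, the summand equals $(t-2)!\binom{t-2}{a-1}$; your rewrite carries a spurious factor $(t-1-a)!$. So the count is exactly $2^{t-2}(t-2)!$. Second, each core path through $e$ extends in \emph{at least}, not exactly, $\Lambda_{t,s}$ ways, since unused vertices of the two cliques can also serve as pages. This is harmless for a lower bound.

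Finally, $t\ge6$ plays no role in the extremal result; it is only needed for $t(t-1)<2^{t-1}$.
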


The rest of this note is organized as follows.  Section~\ref{sec:extremal} proves the extremal theorem for $H_t$, and Section~\ref{sec:one-edge-count} gives the one-edge count.

\section{The extremal graphs}\label{sec:extremal}

We use the exact path theorem in the following form.  If $N=(t-1)q+r$, $0\le r<t-1$, then
\begin{equation}\label{eq:path-exact}
   p_t(N)\coloneqq \ex(N,P_t)=q\binom{t-1}{2}+\binom{r}{2}.
\end{equation}
Moreover, when $N$ is divisible by $t-1$, the unique extremal $P_t$-free graph on $N$ vertices is $(N/(t-1))K_{t-1}$.  This is the exact Erd\H{o}s--Gallai path theorem due to Faudree--Schelp and Kopylov~\cite{ErdosGallai1959,FaudreeSchelp1975,Kopylov1977}.

Let $B_R$ be the $R$-page book, that is, $R$ triangles with a common edge.  We also use the theorem of Edwards and Khad\v{z}iivanov--Nikiforov that every $n$-vertex graph with more than $\floor{n^2/4}$ edges has an edge contained in at least $n/6$ triangles~\cite{Edwards1975,KhadziivanovNikiforov1979}.  Hence, for every fixed $R$, a $B_R$-free graph has at most $\floor{n^2/4}$ edges for all sufficiently large $n$.

Finally, we use the following consequence of Theorem~1.9 of Miao, Liu, and van Dam~\cite{MiaoLiuVanDam2025}: for every fixed $R$, every non-bipartite $B_R$-free graph on $n$ vertices has at most
\begin{equation}\label{eq:nonbip-book}
   \left\lfloor \frac{(n-1)^2}{4}\right\rfloor+O_R(1)
\end{equation}
edges.  Thus a $B_R$-free graph with at least $\floor{n^2/4}-O_R(1)$ edges is bipartite when $n$ is large enough.

Put
\[
   M_t(n)\coloneqq\max_{a+b=n}\{ab+p_t(a)+p_t(b)\}.
\]
Let $\mathcal S_{t,n}$ be the following structural family.  A graph $G$ belongs to $\mathcal S_{t,n}$ if there is a partition $V(G)=A\cup B$ such that
\[
   e(G)=|A||B|+p_t(|A|)+p_t(|B|)=M_t(n),
\]
all edges between $A$ and $B$ are present, and $G[A]$ and $G[B]$ are extremal $P_t$-free graphs.  Let $\mathcal G_{t,n}$ be the subfamily of $H_t$-free graphs in $\mathcal S_{t,n}$.

The next proposition is the extremal input for the counting argument.
\begin{proposition}\label{prop:extremal}
Fix $t\ge6$.  For all sufficiently large $n$,
\[
   \ex(n,H_t)=M_t(n)
\]
and the extremal $H_t$-free graphs are exactly the graphs in $\mathcal{G}_{t,n}$.
\end{proposition}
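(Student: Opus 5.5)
We prove the lower bound $\ex(n,H_t)\ge M_t(n)$ by construction and the upper bound by a stability-and-cleaning argument.

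\emph{Lower bound.} Take $A\cup B$ with $|A|=a$, $|B|=b$ maximizing $ab+p_t(a)+p_t(b)$. Join $A$ and $B$ completely, and place an extremal $P_t$-free graph inside each part. We claim this graph is $H_t$-free, at least for suitable choices of the extremal graphs inside the parts; $\mathcal G_{t,n}$ is nonempty only under this condition, so it must be checked.

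Consider a copy of $H_t$ in such a graph. Each core path edge $v_iv_{i+1}$ lies in $3t$ triangles with pairwise disjoint apexes. Color the core vertices by side. An edge inside $A$ can use apexes from $B$ freely, so the constraint cannot come from counting apexes. It must come from the core path: the core path is a $P_t$ in $G$, and its edges inside $A$ form a linear forest in $G[A]$. In the balanced clique-union extremal graphs, the part $G[A]$ is a disjoint union of $K_{t-1}$'s, so every component of the core path restricted to $A$ has fewer than $t-1$ vertices.

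Cross edges are cheap. A cross edge $xy$ with $x\in A$, $y\in B$ needs $3t$ common neighbours. These are $N_A(x)\cup N_B(y)$, of size at most $2(t-2)$, so a cross edge lies in at most $2(t-2)<3t$ triangles. Hence \emph{every core edge must lie inside a part}. The core $P_t$ is connected, so it lies entirely inside $A$ or entirely inside $B$. That is impossible, since both $G[A]$ and $G[B]$ are $P_t$-free. This is exactly where the choice of $3t$ pages is used, and it shows every member of $\mathcal S_{t,n}$ is $H_t$-free, so $\mathcal G_{t,n}=\mathcal S_{t,n}$ whenever the extremal $P_t$-free graphs admit this degree bound. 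When $t-1\nmid |A|$, the extremal $P_t$-free graphs include examples with vertices of large degree, for instance $K_{k}+\overline{K}_{m}$ type graphs. There cross edges could have many common neighbours, which is why the statement restricts to the subfamily $\mathcal G_{t,n}$. The lower bound $M_t(n)$ only needs one $H_t$-free member, namely the clique union.

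\emph{Upper bound.} Let $G$ be $H_t$-free with $e(G)\ge M_t(n)=n^2/4+O(n)$. We use the supersaturated book theorem, a standard strengthening of the Edwards/Khad\v{z}iivanov--Nikiforov input, in the following form: we may delete $O(1)$ vertices or edges so that the remaining graph is $B_R$-free for some $R=R(t)$, or else we find many edges with linear book size.

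More concretely, call an edge \emph{rich} if it lies in at least $t^2$ triangles. Suppose the rich edges contain a linear forest with $t-1$ edges forming a path. Then we embed $H_t$ greedily: the path supplies the core, and each core edge has at least $3t+3t(t-1)+t$ triangles, enough to pick disjoint apexes avoiding everything used so far. So the rich-edge graph is $P_t$-free and has at most $p_t(n)=O(n)$ edges.

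Deleting the rich edges leaves a $B_{t^2}$-free graph with at least $n^2/4-O(n)$ edges. That loss of $O(n)$ is too large for \eqref{eq:nonbip-book} directly, so we refine it. By Füredi-type stability, the remainder is $o(n^2)$-close to a complete bipartite graph. We take a max-cut $A\cup B$ and use the standard cleaning: vertices with $\Omega(n)$ neighbours on their own side are few, and can be handled because each such vertex creates rich edges. This gives that all but $O(1)$ cross pairs are present and that every inside edge is rich. Any inside edge $xy$ in $A$ with both ends of near-full degree to $B$ has about $|B|$ common neighbours, so it is rich.

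Hence $G[A]$ and $G[B]$ are essentially $P_t$-free. More precisely, $G[A]\cup G[B]$ contains no $P_t$: otherwise the nearly complete bipartite graph between the parts supplies disjoint apexes and yields $H_t$. Therefore
\[
e(G)\le |A||B|-m+p_t(|A|)+p_t(|B|),
\]
where $m$ is the number of missing cross edges. This gives $e(G)\le M_t(n)$, and equality forces $m=0$ and $G[A]$, $G[B]$ extremal $P_t$-free. Together with $H_t$-freeness, that places $G$ in $\mathcal G_{t,n}$.

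\emph{Main obstacle.} The hardest step is the exact cleaning: showing that exceptional vertices, those with poor degree to the opposite side, cannot exist when $e(G)\ge M_t(n)$. We would handle this by the usual degree argument. If some vertex $v$ had $d(v)<n/2-\Omega(n)$, deleting it and using induction-free counting would give $e(G-v)>M_t(n-1)+$ slack. Since $M_t(n)-M_t(n-1)=n/2+O(1)$, this forces many copies of $H_t$ by the previous structural argument. Hence all degrees are near $n/2$, and the max-cut has no misplaced vertices.
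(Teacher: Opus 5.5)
Your lower bound matches the paper's construction and argument. Your remark about non-clique $P_t$-extremal graphs correctly explains why the statement uses $\mathcal G_{t,n}$ rather than $\mathcal S_{t,n}$. The upper bound, however, has a genuine gap.

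Everything after ``we refine it'' is asserted rather than proved. Three claims are unsupported: that all but $O(1)$ cross pairs are present, that every inside edge is rich, and that $G[A]\cup G[B]$ is $P_t$-free. Each needs every consecutive pair on a putative internal core path to have many common neighbours on the opposite side, and nothing you write establishes this.

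Your degree argument is circular as stated. Concluding from $e(G-v)>M_t(n-1)+\text{slack}$ that $G-v$ contains $H_t$ uses $\ex(n-1,H_t)\le M_t(n-1)$, which is the proposition itself, and no induction or progressive-deletion scheme is set up. Even a minimum-degree bound $(1/2-\varepsilon)n$ would not rule out $o(n)$ vertices of $A$ with about $n/4$ neighbours on each side of the max-cut. Two such vertices may be adjacent with almost no common neighbours in $B$, and an internal $P_t$ through that edge need not extend to $H_t$. Separately, your richness threshold $t^2$ is below the $3t(t-1)+t$ that your own greedy embedding needs, though that is easily fixed.

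The missing idea is exact bookkeeping, which makes the stability-and-cleaning detour unnecessary. The loss is not $O(n)$. By \eqref{eq:path-exact}, $p_t(N)=\frac{t-2}{2}N-\frac{r(t-1-r)}{2}$. So $M_t(n)=\floor{n^2/4}+\frac{t-2}{2}n+O_t(1)$, while the rich graph, being $P_t$-free, has at most $p_t(n)=\frac{t-2}{2}n+O_t(1)$ edges. Hence the light graph $G_L$ has at least $\floor{n^2/4}-O_t(1)$ edges and is $B_R$-free. Therefore \eqref{eq:nonbip-book} applies directly and forces $G_L$ to be bipartite; this is what the paper does.

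Take $A\cup B$ to be the bipartition of $G_L$ rather than a max-cut. Then every internal edge is heavy by construction, so $G[A]$ and $G[B]$ are subgraphs of the $P_t$-free heavy graph, with no apex-supply argument needed. Heavy cross edges fill at most $ab-e(G_L)$ pairs. This gives $e(G)\le e(G_L)+(ab-e(G_L))+p_t(a)+p_t(b)\le M_t(n)$, and the equality case is read off from this chain.
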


\begin{proof}
For the lower bound, choose $a+b=n$ attaining $M_t(n)$.  Take $K_{A,B}$ with $|A|=a$ and $|B|=b$, and put
\[
   \left\lfloor\frac{a}{t-1}\right\rfloor K_{t-1}\cup K_{a\bmod(t-1)}
   \quad\text{inside }A,
\]
and similarly inside $B$.  This graph has $M_t(n)$ edges.  Every internal degree is at most $t-2$, so every cross edge has codegree at most $2(t-2)<3t$.  Hence no cross edge can be the base of a $3t$-page book.  If a copy of $H_t$ existed, the connected core $P_t$ would have to lie entirely in $A$ or entirely in $B$, contradicting the fact that the two internal graphs are $P_t$-free.  Thus $\ex(n,H_t)\ge M_t(n)$.

Now let $G$ be an $H_t$-free graph on $n$ vertices.  Set
\[
   R\coloneqq 3t(t-1)+t.
\]
Call an edge heavy if its codegree in $G$ is at least $R$, and light otherwise.  Let $G_H$ and $G_L$ be the spanning graphs formed by the heavy and light edges.

The graph $G_H$ is $P_t$-free.  Indeed, if $v_1\cdots v_t$ were a path in $G_H$, then each of its $t-1$ edges would have at least $R$ common neighbors in $G$.  We may choose the page vertices greedily.  Before choosing pages for the last core edge, at most
\[
   t+3t(t-2)=R-3t
\]
vertices have already been used or forbidden, so at least $3t$ fresh common neighbors remain.  This gives a copy of $H_t$, a contradiction.  Therefore
\begin{equation}\label{eq:heavy-bound}
   e(G_H)\le p_t(n).
\end{equation}

The light graph $G_L$ is $B_R$-free.  Indeed, if an edge of $G_L$ were the spine of an $R$-page book in $G_L$, then it would have at least $R$ common neighbors in $G$, and hence it would be heavy.

We next estimate $M_t(n)-p_t(n)$.  Since
\[
   p_t(N)=\frac{t-2}{2}N+O_t(1)
\]
uniformly in $N$, for every split $a+b=n$ we have
\[
   p_t(a)+p_t(b)-p_t(n)=O_t(1).
\]
Together with $ab\le\floor{n^2/4}$ and the balanced split $a=\floor{n/2}$, $b=\lceil n/2\rceil$, this gives
\[
   M_t(n)-p_t(n)=\floor{n^2/4}+O_t(1).
\]
Thus, if $e(G)\ge M_t(n)$, then by~\eqref{eq:heavy-bound},
\[
   e(G_L)=e(G)-e(G_H)\ge \floor{n^2/4}-O_t(1).
\]
Since $G_L$ is $B_R$-free, the bound~\eqref{eq:nonbip-book} implies that $G_L$ is bipartite for all sufficiently large $n$.  Indeed, if $G_L$ were non-bipartite, then
\[
   e(G_L)\le \left\lfloor \frac{(n-1)^2}{4}\right\rfloor+O_R(1)
      =\floor{n^2/4}-\frac n2+O_R(1),
\]
contradicting the lower bound above when $n$ is large enough.  Let $A\cup B$ be the bipartition of $G_L$, and write $a\coloneqq |A|$, $b\coloneqq |B|$.

All light edges go across $A\cup B$.  A heavy cross edge can only occupy a cross pair which is not already a light edge, and so
\[
   e(G_H[A,B])\le ab-e(G_L).
\]
Also $G_H[A]$ and $G_H[B]$ are $P_t$-free.  Hence
\[
   e(G_H[A])\le p_t(a)\quad\text{and}\quad e(G_H[B])\le p_t(b).
\]
It follows that
\begin{align*}
   e(G)
   &=e(G_L)+e(G_H[A,B])+e(G_H[A])+e(G_H[B])\\
   &\le e(G_L)+(ab-e(G_L))+p_t(a)+p_t(b)\\
   &\le M_t(n).
\end{align*}
This proves the upper bound.

The equality case follows from the same chain of inequalities.  If $G$ is extremal, then $(a,b)$ attains the maximum defining $M_t(n)$, all cross pairs are edges of $G$, and
\[
   e(G[A])=p_t(a)\quad\text{and}\quad e(G[B])=p_t(b).
\]
Since there are no internal light edges, $G[A]$ and $G[B]$ are subgraphs of $G_H$, and hence they are $P_t$-free.  Thus $G[A]$ and $G[B]$ are extremal $P_t$-free graphs.  Therefore $G\in\mathcal G_{t,n}$.  Conversely, every graph in $\mathcal G_{t,n}$ is $H_t$-free and has $M_t(n)$ edges, so it is extremal.
\end{proof}

The balanced order is the only case needed below.  Here the extremal graph is especially clean.
\begin{corollary}\label{cor:balanced-extremal}
Fix $t\ge6$.  For all sufficiently large $s$, the unique extremal $H_t$-free graph on $2(t-1)s$ vertices is
\[
   T^\star\coloneqq K_{(t-1)s,(t-1)s}\cup sK_{t-1}\cup sK_{t-1},
\]
where the two copies of $sK_{t-1}$ are placed inside the two sides of $K_{(t-1)s,(t-1)s}$.  In particular,
\[
   \ex(2(t-1)s,H_t)=(t-1)^2s^2+s(t-1)(t-2).
\]
Moreover, the missing pairs of $T^\star$ are exactly the pairs inside one side and in two different $K_{t-1}$-blocks, and all missing pairs are equivalent under automorphisms of $T^\star$.
\end{corollary}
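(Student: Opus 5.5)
We derive the corollary from Proposition~\ref{prop:extremal} by identifying the graphs in $\mathcal G_{t,n}$ for $n=2(t-1)s$. Write $f(a)\coloneqq a(n-a)+p_t(a)+p_t(n-a)$. We need two facts. First, $M_t(n)$ is attained only at $a=b=(t-1)s$. Second, at this split both sides are forced to be $sK_{t-1}$.

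\textbf{Step 1: the optimal split.} For the balanced split, $(t-1)s$ is divisible by $t-1$, so \eqref{eq:path-exact} gives $p_t((t-1)s)=s\binom{t-1}{2}$. Hence
\[
   f((t-1)s)=(t-1)^2s^2+s(t-1)(t-2).
\]
Now take $a=(t-1)s-d$ with $d\ge1$ (the case $d\le -1$ is symmetric). The product term drops by $d^2$. Since $p_t(N)=\frac{t-2}{2}N-\frac{r(t-1-r)}{2}$ with $r=N\bmod (t-1)$, the function $p_t$ is at most linear with slope $\frac{t-2}{2}$. The two path terms shift by $\mp d$ in their arguments, so the linear parts cancel exactly. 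What remains is the loss of the two correction terms $-\frac{r(t-1-r)}{2}$, each of which is $\le 0$. These vanish in the balanced case, because both sides are divisible by $t-1$. Therefore
\[
   f((t-1)s)-f(a)\ge d^2>0.
\]
So the balanced split is the unique maximizer, and $\ex(2(t-1)s,H_t)=(t-1)^2s^2+s(t-1)(t-2)$ by Proposition~\ref{prop:extremal}. I expect this to be the only step needing care. The main point is to get the exact formula for $p_t(N)$ from \eqref{eq:path-exact}: write $N=(t-1)q+r$ and simplify $q\binom{t-1}{2}+\binom r2$.

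\textbf{Step 2: uniqueness.} In any graph of $\mathcal G_{t,n}$, Step 1 gives $|A|=|B|=(t-1)s$. Then $G[A]$ and $G[B]$ are extremal $P_t$-free graphs on a number of vertices divisible by $t-1$. By the uniqueness part of the Faudree--Schelp/Kopylov theorem, each is $sK_{t-1}$. All cross pairs are present, so $G=T^\star$. Conversely, $T^\star$ is $H_t$-free by the lower-bound argument in the proof of Proposition~\ref{prop:extremal}: cross codegrees are at most $2(t-2)<3t$, and each side is $P_t$-free. So $\mathcal G_{t,n}=\{T^\star\}$.

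\textbf{Step 3: missing pairs.} In $T^\star$ every cross pair is an edge, and so is every pair inside a block $K_{t-1}$. The non-edges are therefore exactly the pairs $\{x,y\}$ lying on the same side in different blocks. For transitivity on these pairs, first note that swapping the two sides is an automorphism. Within a side, any permutation of the blocks, together with arbitrary permutations of vertices inside each block, is also an automorphism, and it fixes the other side pointwise. Given two missing pairs $\{x,y\}$ and $\{x',y'\}$, move them to the same side using the swap. Then permute blocks so that the block of $x$ goes to that of $x'$ and the block of $y$ goes to that of $y'$. Finally permute within those blocks so that $x\mapsto x'$ and $y\mapsto y'$.
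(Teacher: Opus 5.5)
Your proof is correct and follows the paper's own argument. You use the closed form $p_t(N)=\frac{t-2}{2}N-\frac{r(t-1-r)}{2}$ to show that the linear parts cancel, so any unbalanced split loses at least $d^2$, and you then combine Proposition~\ref{prop:extremal} with the uniqueness of $sK_{t-1}$. Your explicit automorphism argument for transitivity on the missing pairs fills in the step the paper calls immediate.
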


\begin{proof}
Put $k\coloneqq t-1$.  We first show that the maximum defining $M_t(2ks)$ is attained only by the split $(ks,ks)$, up to exchanging the two sides.  From~\eqref{eq:path-exact}, if $N\equiv r\pmod k$ and $0\le r<k$, then
\[
   p_t(N)=\frac{k-1}{2}N-\frac{r(k-r)}2 .
\]
Consider a split $(ks+d,ks-d)$ with $d>0$, and write $d\equiv r\pmod k$, $0\le r<k$.  Relative to the balanced split, the cross term changes by
\[
   (ks+d)(ks-d)-(ks)^2=-d^2,
\]
while the two path-extremal terms change by
\[
   p_t(ks+d)+p_t(ks-d)-2p_t(ks)
   =
   \begin{cases}
      0,& r=0,\\
      -r(k-r),& r\ne0.
   \end{cases}
\]
Thus every unbalanced split gives a strictly smaller value.  Since the unique extremal $P_t$-free graph on $(t-1)s$ vertices is $sK_{t-1}$, Proposition~\ref{prop:extremal} gives the result.  The description of missing pairs is immediate.
\end{proof}

\section{The one-edge count}\label{sec:one-edge-count}

We briefly explain the comparison.  By Corollary~\ref{cor:balanced-extremal}, adding one missing edge to the extremal graph $T^\star$ joins two different $K_{t-1}$-blocks in one side and creates many core paths.  By contrast, replacing those two blocks by
\[
   K_t\cup K_{t-2}
\]
adds one edge but creates fewer core paths.  In the lower bound for $c(n,H_t)$, we count only those copies whose page vertices all lie in the opposite side; this gives the same page-selection factor $\Lambda_{t,s}$ as in the upper-bound construction and is sufficient for the comparison.

The comparison rests on two elementary counts.
\begin{lemma}\label{lem:core-counts}
Let $t\ge6$.  The following statements hold.
\begin{enumerate}
\item Let $J_t$ be obtained from two disjoint copies of $K_{t-1}$ by adding one edge between them.  Then
\[
   N_{P_t}(J_t)=2^{t-2}(t-2)!.
\]
\item The complete graph $K_t$ contains
\(
   t!/2
\)
copies of $P_t$.
\end{enumerate}
\end{lemma}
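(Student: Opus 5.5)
For part~1, I would use the fact that every copy of $P_t$ in $J_t$ must use the bridge edge. Let the two blocks be $X$ and $Y$, each inducing $K_{t-1}$, and let $xy$ be the added edge with $x\in X$, $y\in Y$. A path on $t$ vertices cannot fit inside a single block, since a block has only $t-1$ vertices. The bridge $xy$ is the only edge between $X$ and $Y$. Hence every copy of $P_t$ contains $xy$ exactly once and splits into two pieces:
\begin{itemize}
\item a subpath inside $X$ with $a$ vertices that ends at $x$;
\item a subpath inside $Y$ with $b$ vertices that starts at $y$.
\end{itemize}
Here $a+b=t$ and $1\le a,b\le t-1$. I will orient each such path so that its $X$-part comes first. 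This orientation is canonical, so each subgraph copy is counted exactly once.

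Next I count the pieces. In $K_{t-1}$, a path on $a$ vertices with one prescribed endpoint $x$ is an ordered choice of the other $a-1$ vertices from the remaining $t-2$. So there are $(t-2)!/(t-1-a)!$ of them. Similarly, there are $(t-2)!/(t-1-b)!$ choices on the $Y$ side. Since $t-1-b=a-1$, the total is
\[
\sum_{a=1}^{t-1}\frac{(t-2)!}{(t-1-a)!}\cdot\frac{(t-2)!}{(a-1)!}
=(t-2)!\sum_{j=0}^{t-2}\binom{t-2}{j}=2^{t-2}(t-2)!,
\]
where $j=a-1$. This is a binomial-theorem collapse.

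For part~2, copies of $P_t$ in $K_t$ are Hamiltonian paths. There are $t!$ orderings of the vertices. Each path arises from exactly two orderings, one for each direction, so $K_t$ contains $t!/2$ copies.

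The only step that needs care is the injectivity of the orientation convention in part~1. A path must not be counted twice through its reversal. Because the path meets the bridge exactly once, with one side in $X$ and the other in $Y$, fixing the $X$-side first gives a canonical orientation. Everything else is routine.
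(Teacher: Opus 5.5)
Your proof is correct and follows essentially the same route as the paper: you split each copy of $P_t$ at the bridge $xy$ according to how many vertices lie on each side, count the ordered choices on each side, and collapse the sum with the binomial theorem, and part~2 is the same Hamilton-path count. Your explicit arguments that every copy must use the bridge, and that putting the $X$-part first gives a canonical orientation, fill in details the paper leaves implicit.
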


\begin{proof}
For the first assertion, every copy of $P_t$ in $J_t$ uses the added edge, say $xy$.  If the path uses $i$ further vertices in the $K_{t-1}$ containing $x$ and $t-2-i$ further vertices in the other $K_{t-1}$, then the number of choices is
\[
   \binom{t-2}{i}i!\binom{t-2}{t-2-i}(t-2-i)!.
\]
Summing over $i=0,1,\ldots,t-2$ gives
\[
   \sum_{i=0}^{t-2}\binom{t-2}{i}i!\binom{t-2}{t-2-i}(t-2-i)!
   =2^{t-2}(t-2)!.
\]
For the second assertion, a copy of $P_t$ in $K_t$ is a Hamilton path of $K_t$, so the number is $t!/2$.
\end{proof}

We now compare adding one edge to the extremal graph with the cheaper graph obtained by changing two clique blocks.
\begin{proof}[Proof of Theorem~\ref{thm:main}]
Let $n\coloneqq2(t-1)s$, where $s$ is sufficiently large.  By Corollary~\ref{cor:balanced-extremal}, the unique extremal graph is $T^\star$.  Therefore
\[
   c(n,H_t)=t_{H_t}(n,1)
   =\min_{e\notin E(T^\star)}N_{H_t}(T^\star+e).
\]

Add one missing edge $e$ to $T^\star$.  By Corollary~\ref{cor:balanced-extremal}, this edge joins two different $K_{t-1}$-blocks inside one side.  Part~(1) of Lemma~\ref{lem:core-counts} gives $2^{t-2}(t-2)!$ core copies of $P_t$ in those two blocks together with $e$.  For each such core path, every core edge has all $(t-1)s$ vertices of the opposite side as common neighbors.  Choosing disjoint $3t$-sets of page vertices in the opposite side for the $t-1$ core edges gives
\[
   \Lambda_{t,s}\coloneqq\prod_{j=0}^{t-2}\binom{(t-1)s-3tj}{3t}
\]
copies of $H_t$ for each core path.  For a fixed core path, different choices of these disjoint page sets give different subgraph copies of $H_t$.  Distinct core paths also give distinct copies, since the core edges are part of the chosen subgraph copy of $H_t$.  Extra edges in the ambient graph are irrelevant because copies are not required to be induced.  Thus every graph obtained from $T^\star$ by adding one edge contains at least
\begin{equation}\label{eq:c-lower}
   2^{t-2}(t-2)!\Lambda_{t,s}
\end{equation}
copies of $H_t$.  Hence $c(n,H_t)\ge 2^{t-2}(t-2)!\Lambda_{t,s}$.

Now construct a graph $Y_{t,s}$ with $\ex(n,H_t)+1$ edges.  Start with $T^\star$, choose two $K_{t-1}$-blocks in one side, delete their internal edges, and put $K_t\cup K_{t-2}$ on the same $2t-2$ vertices.  Since
\[
   e(K_t)+e(K_{t-2})=\binom{t}{2}+\binom{t-2}{2}
   =2\binom{t-1}{2}+1,
\]
we have $e(Y_{t,s})=\ex(n,H_t)+1$.

We count the copies of $H_t$ in $Y_{t,s}$.  Every cross edge has codegree at most
\[
   (t-1)+(t-2)=2t-3<3t,
\]
so no cross edge can be a base edge of a $3t$-page book.  Since the core path is connected, every core copy of $P_t$ must lie wholly inside one side.  The only new internal component containing a $P_t$ is the $K_t$, and by part~(2) of Lemma~\ref{lem:core-counts} it contains $t!/2$ core copies.

For each core path in this $K_t$, all $t$ vertices of the $K_t$ are already used by the core.  The other internal components in the same side are disjoint from this $K_t$, so no unused same-side vertex is adjacent to both endpoints of any core edge.  Hence all page vertices must lie in the opposite side, giving exactly $\Lambda_{t,s}$ choices.  Distinct core paths and distinct choices of page sets give distinct subgraph copies.  Therefore
\begin{equation}\label{eq:h-upper}
   h_{H_t}(n,1)\le N_{H_t}(Y_{t,s})=\frac{t!}{2}\Lambda_{t,s}.
\end{equation}
Combining~\eqref{eq:c-lower} and~\eqref{eq:h-upper},
\[
   h_{H_t}(n,1)
      \le \frac{t!}{2}\Lambda_{t,s}
      < 2^{t-2}(t-2)!\Lambda_{t,s}
      \le c(n,H_t)=t_{H_t}(n,1),
\]
where the strict inequality follows from $t(t-1)<2^{t-1}$ for every $t\ge6$.  This proves the theorem.
\end{proof}

\section*{Acknowledgments}

\begin{sloppypar}
We learned that Heng Li, Yongtao Li, and Jie Ma have independently (in an unpublished manuscript) obtained similar results, and we thank Heng Li for informing us of their work and for helpful discussions.
\end{sloppypar}
\bibliographystyle{abbrv}
\bibliography{supersaturation}

@unpublished{Edwards1975,
  author = {Edwards, C. S.},
  title = {A lower bound for the largest number of triangles with a common edge},
  note = {Unpublished manuscript},
  year = {1975}
}

@article{ErdosGallai1959,
  author = {Erd{\H{o}}s, Paul and Gallai, Tibor},
  title = {On maximal paths and circuits of graphs},
  journal = {Acta Math. Acad. Sci. Hungar.},
  fjournal = {Acta Mathematica Academiae Scientiarum Hungaricae},
  volume = {10},
  year = {1959},
  pages = {337--356}
}

@article{Erdos1955,
  author = {Erd{\H{o}}s, Paul},
  title = {Some theorems on graphs},
  journal = {Riveon Lematematika},
  volume = {9},
  year = {1955},
  pages = {13--17}
}

@article{Erdos1962,
  author = {Erd{\H{o}}s, Paul},
  title = {On a theorem of {Rademacher--Tur{\'a}n}},
  journal = {Illinois J. Math.},
  fjournal = {Illinois Journal of Mathematics},
  volume = {6},
  year = {1962},
  pages = {122--127}
}

@article{FaudreeSchelp1975,
  author = {Faudree, Ralph J. and Schelp, Richard H.},
  title = {Path {R}amsey numbers in multicolorings},
  journal = {J. Combin. Theory Ser. B},
  fjournal = {Journal of Combinatorial Theory. Series B},
  volume = {19},
  year = {1975},
  pages = {150--160}
}

@article{KhadziivanovNikiforov1979,
  author = {Khad{\v{z}}iivanov, Nikolai and Nikiforov, Vladimir},
  title = {Solution of a problem of {P}. {E}rd{\H{o}}s about the maximum number of triangles with a common edge in a graph},
  journal = {C. R. Acad. Bulgare Sci.},
  fjournal = {Comptes Rendus de l'Acad\'emie Bulgare des Sciences},
  volume = {32},
  year = {1979},
  pages = {1315--1318}
}

@article{Kopylov1977,
  author = {Kopylov, G. N.},
  title = {On maximal paths and cycles in a graph},
  journal = {Dokl. Akad. Nauk SSSR},
  fjournal = {Doklady Akademii Nauk SSSR},
  volume = {234},
  year = {1977},
  pages = {19--21}
}

@inproceedings{LovaszSimonovits1976,
  author = {Lov{\'a}sz, L{\'a}szl{\'o} and Simonovits, Mikl{\'o}s},
  title = {On the number of complete subgraphs of a graph},
  booktitle = {Proceedings of the Fifth British Combinatorial Conference (University of Aberdeen, Aberdeen, 1975)},
  series = {Congr. Numer.},
  volume = {XV},
  year = {1976},
  pages = {431--441},
  publisher = {Utilitas Mathematica},
  address = {Winnipeg}
}

@incollection{LovaszSimonovits1983,
  author = {Lov{\'a}sz, L{\'a}szl{\'o} and Simonovits, Mikl{\'o}s},
  title = {On the number of complete subgraphs of a graph. {II}},
  booktitle = {Studies in Pure Mathematics},
  publisher = {Birkh{\"a}user},
  address = {Basel},
  year = {1983},
  pages = {459--495}
}

@article{MaYuan2023,
  author = {Ma, Jie and Yuan, Long-Tu},
  title = {Supersaturation beyond color-critical graphs},
  journal = {Combinatorica},
  volume = {45},
  number = {2},
  year = {2025},
  pages = {Paper No. 18, 40},
  doi = {10.1007/s00493-025-00143-5}
}

@article{MiaoLiuVanDam2025,
  author = {Miao, Lu and Liu, Ruifang and van Dam, Edwin R.},
  title = {Tur{\'a}n number of books in non-bipartite graphs},
  journal = {J. Graph Theory},
  fjournal = {Journal of Graph Theory},
  year = {2026},
  note = {Published online 14 April 2026, {DOI}: 10.1002/jgt.70039},
  doi = {10.1002/jgt.70039}
}

@article{Mubayi2010,
  author = {Mubayi, Dhruv},
  title = {Counting substructures {I}: color critical graphs},
  journal = {Adv. Math.},
  fjournal = {Advances in Mathematics},
  volume = {225},
  number = {5},
  year = {2010},
  pages = {2731--2740}
}

@article{Mubayi2013,
  author = {Mubayi, Dhruv},
  title = {Counting substructures {II}: hypergraphs},
  journal = {Combinatorica},
  volume = {33},
  year = {2013},
  pages = {591--612}
}

@article{PikhurkoYilma2017,
  author = {Pikhurko, Oleg and Yilma, Zelealem B.},
  title = {Supersaturation problem for color-critical graphs},
  journal = {J. Combin. Theory Ser. B},
  fjournal = {Journal of Combinatorial Theory. Series B},
  volume = {123},
  year = {2017},
  pages = {148--185}
}
\end{document}